\DeclareFontFamily{U}{mathb}{\hyphenchar\font45}
\DeclareFontShape{U}{mathb}{m}{n}{
      <5> <6> <7> <8> <9> <10> gen * mathb
      <10.95> mathb10 <12> <14.4> <17.28> <20.74> <24.88> mathb12
      }{}
\DeclareSymbolFont{mathb}{U}{mathb}{m}{n}
\DeclareMathSymbol{\righttoleftarrow}{3}{mathb}{"FD}
\newcommand{\actsfromright}{\righttoleftarrow}
\theoremstyle{plain}
\newtheorem{prop}{Proposition}
\newtheorem{theo}[prop]{Theorem}
\theoremstyle{remark}
\theoremstyle{definition}
\newtheorem{rema}[prop]{Remark}
\newtheorem{exam}[prop]{Example}
\numberwithin{equation}{section}
\def\Am{{\mathrm{Am}}}
\def\fA{{\mathfrak A}}
\def\fD{{\mathfrak D}}
\def\fS{{\mathfrak S}}
\def\fS{{\mathfrak S}}
\def\bG{{\mathbb G}}
\def\bP{{\mathbb P}}
\def\bZ{{\mathbb Z}}
\def\rH{{\mathrm H}}
\def\lra{\longrightarrow}
\def\bF{{\mathbb F}}
\def\Br{\mathrm{Br}}
\def\Pic{\mathrm{Pic}}
\def\Aut{\mathrm{Aut}}
\def\PSL{\mathsf{PSL}}
\def\Hom{\mathrm{Hom}}
\def\lim{\mathrm{lim}}
\def\Ker{\mathrm{Ker}}
\begin{document}
\title[Equivariant unirationality]{Cohomological obstructions to equivariant unirationality}

\author[Yuri Tschinkel]{Yuri Tschinkel}
\address{
  Courant Institute,
  251 Mercer Street,
  New York, NY 10012, USA
}
\email{tschinkel@cims.nyu.edu}

\address{Simons Foundation\\
160 Fifth Avenue\\
New York, NY 10010\\
USA}

\author{Zhijia Zhang}
\address{
  Courant Institute,
  251 Mercer Street,
  New York, NY 10012, USA 
}
\email{zz1753@nyu.edu}

\date{\today}

\begin{abstract}
We study cohomological obstructions to equivariant unirationality, with special regard to actions of finite groups on del Pezzo surfaces and Kummer quartic double solids.  
\end{abstract}

\maketitle

\section{Introduction}
\label{sec.intro}

Let $X$ be a smooth projective 
rational variety with a generically free regular action of a finite group $G$. We say that $X$ is {\em $G$-unirational}, respectively, {\em projectively $G$-unirational}, if there exists a dominant $G$-equivariant rational map
$$
\bP(V)\dashrightarrow X,
$$
where $V$ is a representation of $G$, respectively, of a projective representation of $G$. In the analogy between geometry over nonclosed fields and equivariant geometry, this should be viewed as being dominated by projective space, i.e., unirationality, versus being dominated by a Brauer-Severi variety. 

An obvious obstruction to rationality of $X$ over a field $k$ is the absence of $k$-rational points -- a birational invariant. In the equivariant context, 
existence of $G$-fixed points is not an equivariant birational invariant -- only the existence of fixed points upon restriction to abelian subgroups is; we refer to this as Condition {\bf (A)}.       

In general, unirationality and $G$-unirationality are difficult to establish or exclude, when the obvious obstructions, such as the absence of $k$-rational points, respectively, failure of Condition {\bf (A)}, vanish. 
In this note, we explore new cohomological obstructions to (projective) unirationality, and apply them to del Pezzo surfaces and Fano threefolds. 
The obstructions arise from considerations of the $G$-action on the Picard group $\Pic(X)$. 
Concretely, the Leray spectral sequence yields homomorphisms in group cohomology
$$
\rH^{j-2}(G,\Pic(X))\stackrel{\delta_j}{\lra} \rH^j(G,k^\times), \quad 
j=2,3, 
$$
where $G$ acts trivially on $k^\times$. The images of $\delta_j$ are stable birational invariants. Moreover, they vanish in the presence of $G$-fixed points and 
for $G$-unirational $X$, see Section~\ref{sect:coho}. 

Similar homomorphisms exist in the framework of birational geometry over nonclosed fields $k$, for Galois cohomology. However, their role in that context is limited: the images are trivial in presence of $k$-points, which is a (stable) birational invariant and an obvious necessary condition for $k$-unirationality. In the equivariant context, the invariants are quite subtle and informative -- this highlights a stark difference between birational geometry over $k$ and over the stack $BG$.

The main result of this paper is a classification of generically free regular actions of finite groups $G$ on smooth projective $X$ such that 
\begin{itemize}
    \item $X^A\neq \emptyset$, for all abelian $A\subseteq G$, and
    \item $\mathrm{Am}^3(X,H)\neq 0$, for 
   some $H\subseteq G$, 
\end{itemize}
when $X$ is a del Pezzo surface 
(see Theorem~\ref{theo:dp}) or a Kummer
quartic double solid, i.e., a 
double cover of $\bP^3$, ramified in a Kummer surface arising from the  
Jacobian of a genus 2 curve with maximal automorphisms
(see Theorem~\ref{thm:kummer}). 
More precisely, in Theorem~\ref{theo:dp}, we find all pairs $(X,G)$ where $X$ is a del Pezzo surface and $G\subset\Aut(X)$  a finite group containing a subgroup $H$
with nontrivial  $\Am^3(X,H)$. For some of these groups acting on del Pezzo surfaces of degree 2 one has $X^A\neq \emptyset$, for all abelian subgroups $A\subset G$. This shows that the analog of \cite[Theorem 1.4]{Duncan}
fails for del Pezzo surfaces of degree 2. In all cases, we find a $\mathrm{Q}_8$, the quaternion group of order 8, 
that leads to the nontriviality of the  obstruction. As a corollary, such varieties are not $G$-unirational, nor projectively $G$-unirational.

\

\noindent
{\bf Acknowledgments:} 
We are grateful to Andrew Kresch for his interest and suggestions. 
The first author was partially supported by NSF grant 2301983.
We are grateful to the referee for an exceptionally careful reading and many helpful suggestions.

\section{Cohomological obstructions}
\label{sect:coho}

Let $k$ be an algebraically closed field of characteristic zero and 
$X$ a smooth projective rational variety over $k$, with a regular action of a finite group $G$. The Leray spectral sequence for $G$-actions yields an exact sequence (see, e.g., \cite[Section 3]{KT-dp}):
\begin{align}
\begin{split}
\label{eqn:BrXG}
0&\to  \Hom(G,k^\times)\to \Pic(X,G)\to 
\Pic(X)^G \stackrel{\delta_2}{\lra}  \rH^2(G,k^\times)\\
&\qquad\quad \!\stackrel{\gamma}{\lra} \Br([X/G])\stackrel{\beta}{\lra} \rH^1(G,\Pic(X))\stackrel{\delta_3}{\lra}  \rH^3(G,k^\times),
\end{split}
\end{align}
where $\Pic(X,G)$ is the group of isomorphism classes of $G$-linearized line bundles on $X$, 
and $\Br([X/G])$ is the Brauer group of the quotient stack. 
This gives rise to the following invariants:
\begin{itemize}
    \item $\mathrm{Am}^2(X,G):=\mathrm{Im}(\delta_2)$, the {\em Amitsur group}, and 
    \item  $\mathrm{Am}^3(X,G):=\mathrm{Im}(\delta_3)$.  
\end{itemize}

The {\em Amitsur} group 
$$
\mathrm{Am}(X,G)=\mathrm{Am}^2(X,G),
$$
defined in \cite[Section 6]{BC-finite} as the image of $G$-invariant divisor classes 
$$
\Pic(X)^G\stackrel{\delta_2}{\longrightarrow} \rH^2(G,k^\times), 
$$
is a stable $G$-birational invariant. The same holds for the higher Amitsur group $\mathrm{Am}^3(X,G)$, see \cite[Section 3]{KT-dp}. 

We note in passing that basic results in group cohomology allow to reduce the study of these invariants to $p$-Sylow subgroups: vanishing of $\mathrm{Am}^j(X,H)$ for $p$-Sylow subgroups  $H$ of $G$, for all $p$, implies the vanishing for all subgroups of $G$.
Furthermore, both groups vanish when $G$ has a fixed point on $X$, by functoriality. They vanish for $G$-varieties 
which are stably linearizable, or when $G$ is cyclic.  
We record the following refinement: 

\begin{prop}
    \label{prop:obstr}
Let $Y\to X$ be a $G$-equivariant morphism of smooth projective varieties
with regular $G$-actions. Then 
$$
\mathrm{Am}^j(X,G)\subseteq \mathrm{Am}^j(Y,G), \quad j=2,3.
$$
\end{prop}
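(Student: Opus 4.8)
The plan is to deduce the inclusion from the naturality of the spectral sequence that produces $\delta_2$ and $\delta_3$. Recall that the exact sequence~\eqref{eqn:BrXG} is the low-degree part of the Hochschild--Serre (Leray) spectral sequence
\[
E_2^{p,q}(X)=\rH^p(G,\rH^q(X,\gm))\Rightarrow \rH^{p+q}([X/G],\gm),
\]
in which, after the identifications $\rH^0(X,\gm)=k^\times$ and $\rH^1(X,\gm)=\Pic(X)$, the maps $\delta_2$ and $\delta_3$ are exactly the $d_2$-differentials $E_2^{0,1}(X)=\Pic(X)^G\lra E_2^{2,0}(X)=\rH^2(G,k^\times)$ and $E_2^{1,1}(X)=\rH^1(G,\Pic(X))\lra E_2^{3,0}(X)=\rH^3(G,k^\times)$; here the standing rationality hypothesis, which gives $\rH^2(X,\gm)=\Br(X)=0$, is what lets us read off $\delta_3$ as this single $d_2$.

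First I would observe that the $G$-equivariant morphism $f\colon Y\to X$ induces a morphism of quotient stacks $[Y/G]\to[X/G]$ over $BG$, and hence a morphism of the associated spectral sequences $E_\bullet^{\bullet,\bullet}(X)\to E_\bullet^{\bullet,\bullet}(Y)$. On the $E_2$-page this morphism is the one induced by the pullbacks $f^*\colon \rH^q(X,\gm)\to \rH^q(Y,\gm)$ on coefficients: for $q=0$ it is the identity $k^\times\to k^\times$ (pullback of global constants, using that $X$ and $Y$ are proper and connected), while for $q=1$ it is $f^*\colon \Pic(X)\to\Pic(Y)$, which is a map of $G$-modules since equivariance of $f$ gives $g_Y^*f^*=f^*g_X^*$. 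Because a morphism of spectral sequences commutes with the differentials, I obtain for $j=2,3$ the commutative square
\[
\begin{tikzcd}
\rH^{j-2}(G,\Pic(X)) \arrow[r,"\delta_j"] \arrow[d,"f^*"'] & \rH^j(G,k^\times) \arrow[d,equal] \\
\rH^{j-2}(G,\Pic(Y)) \arrow[r,"\delta_j"'] & \rH^j(G,k^\times),
\end{tikzcd}
\]
whose right vertical arrow is the identity by the $q=0$ computation. This yields $\delta_j^X=\delta_j^Y\circ f^*$, so that $\mathrm{Am}^j(X,G)=\mathrm{Im}(\delta_j^X)=\mathrm{Im}(\delta_j^Y\circ f^*)\subseteq \mathrm{Im}(\delta_j^Y)=\mathrm{Am}^j(Y,G)$.

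The step I expect to be the crux is the naturality of Hochschild--Serre together with the precise identification of the induced maps on the bottom two rows; the decisive point is that the map on $q=0$ is the \emph{identity} on $k^\times$, which is what forces the right vertical arrow to be an identity rather than merely some endomorphism, and hence upgrades the commutative square into the inclusion of images. For $j=2$ this can be cross-checked by hand: if $L\in\Pic(X)^G$ and one trivializes $g^*L\cong L$ to extract the obstruction $2$-cocycle valued in $k^\times$, then $f^*L\in\Pic(Y)^G$ has obstruction cocycle equal to the $f^*$-pullback of that for $L$, giving the same class under $\rH^0(Y,\gm)=k^\times$ and hence $\delta_2^Y(f^*L)=\delta_2^X(L)$ directly. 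Finally I would note that $f$ need not be dominant, finite, or flat: pullback of line bundles and of global units is defined for every morphism, so all of the $E_2$-maps above exist with no further hypotheses.
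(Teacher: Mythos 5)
Your proof is correct and takes essentially the same route as the paper: both deduce the inclusion from functoriality of the Leray spectral sequence via the commutative square $\delta_j^X=\delta_j^Y\circ f^*$ with the identity on $\rH^j(G,k^\times)$. The paper records exactly this diagram in one line; your extra details (identifying $\delta_j$ with the $d_2$-differentials and checking that the map on the $q=0$ row is the identity on $k^\times$) simply make the same step explicit.
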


\begin{proof}
Containment follows from functoriality of the Leray spectral sequence:
\[
\xymatrix{
\Pic(X)^G\ar[r]^(0.45){\delta_2}\ar[d] & \rH^2(G,k^\times)\ar@{=}[d] && \rH^1(G,\Pic(X)) \ar[r]^(0.55){\delta_3}\ar[d] & \rH^3(G,k^\times)\ar@{=}[d] \\
\Pic(Y)^G\ar[r]^(0.45){\delta_2} & \rH^2(G,k^\times) && \rH^1(G,\Pic(Y)) \ar[r]^(0.55){\delta_3} & \rH^3(G,k^\times)
}
\]
\end{proof}

We are now in the position to formulate necessary conditions for
\begin{itemize} 
\item $G$-unirationality: 
\begin{itemize}
    \item Condition {\bf (A)}: for all abelian $H\subseteq G$, one has $X^H\neq \emptyset$.
    \item Amitsur: for all $H\subseteq G$, one has $\mathrm{Am}^j(X,H)=0$, $j=2,3$. 
\end{itemize}
\item projective $G$-unirationality: 
\begin{itemize}
    \item Amitsur: for all $H\subseteq G$, one has that 
    $\mathrm{Am}^2(X,H)$ is cyclic.
    \item Amitsur: for all $H\subseteq G$, one has 
    $\mathrm{Am}^3(X,H)=0$. 
\end{itemize}
\end{itemize}

Indeed, for {\em linear} actions, we have
$$
\mathrm{Am}^2(X,G)=0, 
$$
since, by definition, the generator of $\Pic(\bP(V))$ is $G$-linearized. For {\em projectively linear} actions, the Amitsur group is cyclic. 
Furthermore, for linear and projectively linear actions, we have 
$$
\rH^1(G,\Pic(\bP(V)))=0,  
$$
which implies the vanishing
$$
\mathrm{Am}^3(X,G)=0.  
$$

We recall  the definition of the {\em Bogomolov multiplier} of a finite group:
$$
\mathrm{B}^2(G):=\Ker\left(\rH^2(G,k^\times)\to \bigoplus_{A}\,  \rH^2(A,k^\times)\right), 
$$
where $A$ runs over all abelian subgroups of $G$. 
This invariant emerged in the study of Noether's problem; the main result is that it equals the unramified Brauer group of the quotient $V/G$, where $V$ is a faithful representation of $G$ \cite{Bog-linear}.
More generally, one may consider higher-degree versions
$$
\mathrm{B}^n(G):=\Ker\left(\rH^n(G,k^\times)\to \bigoplus_{A}\,  \rH^n(A,k^\times)\right),  \quad n\ge 2. 
$$
For $n=3$, it is easy to obtain nonvanishing, see the tables below. 

\begingroup
\tiny
\setlength{\LTcapwidth}{\textwidth}
\captionsetup{font=tiny}
\begin{longtable}{|c|c|c|}
  \caption{Bogomolov multipliers}\label{table:bg1}\\
\hline
  GapID&$G$ & $\mathrm{B}^3(G)$\\\hline
  {\tt (8,4)}&$\mathrm{Q}_8$&$\bZ/2$\\\hline
  \hline
{\tt (16,9)}&$\mathrm{Q}_{16}$&$\bZ/2$\\\hline
{\tt (16,12)}&$C_2\times\mathrm{Q}_{8}$&$(\bZ/2)^3$\\\hline
{\tt (16,13)}&$\fD_4:C_2$&$\bZ/2$\\\hline
\hline
  {\tt (81,3)}&$C_3^2:C_9$&$\bZ/3$\\\hline
  {\tt (81,8)}&$\mathrm{He}_3.C_3$&$\bZ/3$\\\hline
  {\tt (81,10)}&$C_3.\mathrm{He}_3$&$(\bZ/3)^2$\\\hline
  {\tt (81,13)}&$C_3\times C_9:C_3$&$\bZ/3$\\\hline
  {\tt (81,14)}&$C_3.C_3^3$&$\bZ/3$\\\hline
 \end{longtable}
\endgroup
Even more generally, one may consider
$$
\mathrm B^n(G,M):=\Ker\left(\rH^n(G,M)\to \bigoplus_{A}\,  \rH^n(A,M)\right),  \quad n\ge 2, 
$$
for an arbitrary $G$-module $M$; as explained in \cite{KT-uni-toric}, the group 
$$
\mathrm B^2(G, \Pic(X)^\vee\otimes k^\times)
$$
also receives an obstruction to $G$-unirationality, when the action satisfies Condition {\bf (A)}.

The following proposition clarifies the connection between
Bogomolov multipliers and Amitsur invariants:

\begin{prop}
Let $X$ be a smooth projective variety with a regular action of a finite group $G$, satisfying Condition {\bf (A)}. Then 
$$
\mathrm{Am}^j(X,G)\subseteq \mathrm{B}^j(G), \quad j=2,3. 
$$
\end{prop}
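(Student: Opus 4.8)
The plan is to deduce the containment from the vanishing of the Amitsur groups in the presence of a fixed point, already recorded above, by transporting it along the restriction maps in group cohomology. Since, by definition, $\mathrm{B}^j(G)$ is the kernel of the total restriction map $\bigoplus_A \res^G_A\colon \rH^j(G,k^\times)\to \bigoplus_A \rH^j(A,k^\times)$, where $A$ ranges over the abelian subgroups of $G$, it suffices to prove that every class in $\mathrm{Am}^j(X,G)$ restricts to zero in $\rH^j(A,k^\times)$ for each abelian $A\subseteq G$.

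The first step is to record the naturality of the sequence \eqref{eqn:BrXG} with respect to the inclusion $A\hookrightarrow G$. Because the coefficient module $k^\times$ carries the trivial action for both $G$ and $A$, restriction of the $G$-action to $A$ induces a morphism of the corresponding Leray spectral sequences compatible with the restriction homomorphisms in group cohomology. Concretely, this yields commutative squares
\[
\xymatrix{
\Pic(X)^G \ar[r]^(0.42){\delta_2}\ar[d] & \rH^2(G,k^\times)\ar[d]^{\res^G_A} \\
\Pic(X)^A \ar[r]^(0.42){\delta_2} & \rH^2(A,k^\times)
}
\qquad
\xymatrix{
\rH^1(G,\Pic(X)) \ar[r]^(0.58){\delta_3}\ar[d]^{\res^G_A} & \rH^3(G,k^\times)\ar[d]^{\res^G_A} \\
\rH^1(A,\Pic(X)) \ar[r]^(0.58){\delta_3} & \rH^3(A,k^\times)
}
\]
in which the left vertical map of the first square is the inclusion $\Pic(X)^G\hookrightarrow \Pic(X)^A$ and the remaining verticals are restriction maps. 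Commutativity gives $\res^G_A(\mathrm{Am}^j(X,G))\subseteq \mathrm{Am}^j(X,A)$ for $j=2,3$.

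The second step uses Condition {\bf (A)}. Fix an abelian $A\subseteq G$; then $X^A\neq\emptyset$, and choosing a fixed point $x\in X^A$ gives an $A$-equivariant morphism $\{x\}\hookrightarrow X$. Applying Proposition~\ref{prop:obstr} to this morphism for the group $A$ yields $\mathrm{Am}^j(X,A)\subseteq \mathrm{Am}^j(\{x\},A)$; but $\Pic(\{x\})=0$ makes both $\delta_2$ and $\delta_3$ vanish on the point, so $\mathrm{Am}^j(\{x\},A)=0$ and hence $\mathrm{Am}^j(X,A)=0$ (equivalently, this is the already-noted vanishing in the presence of a fixed point). Combining the two steps, $\res^G_A$ annihilates $\mathrm{Am}^j(X,G)$ for every abelian $A$, which is exactly the statement $\mathrm{Am}^j(X,G)\subseteq \mathrm{B}^j(G)$.

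The only substantive point is the naturality asserted in the first step: one must verify that the transgression/edge maps $\delta_j$ coming from the Leray spectral sequence commute with restriction to a subgroup. This is formal, following from functoriality of the spectral sequence in the acting group together with the triviality of the coefficients $k^\times$, and requires no geometric input beyond Condition {\bf (A)}; I therefore expect it to be the main, though entirely routine, obstacle, with everything else immediate from Proposition~\ref{prop:obstr} and the vanishing of $\Pic$ on a point.
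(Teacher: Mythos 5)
Your argument is correct and is exactly the one the paper intends: the paper leaves the proposition without a printed proof, but the two ingredients you use (compatibility of $\delta_j$ with restriction to a subgroup, and the vanishing of $\mathrm{Am}^j(X,A)$ in the presence of an $A$-fixed point, which the paper records just before stating the result) are precisely what is meant. Your reduction of the fixed-point vanishing to Proposition~\ref{prop:obstr} applied to $\{x\}\hookrightarrow X$ is a clean way to make that step explicit.
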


In the following sections we present examples with obstructions to $G$-unirationality, based on the nonvanishing $\mathrm{Am}^3(X,G)$, for $G=\mathrm{Q}_8$, the smallest group with nonvanishing $\mathrm B^3(G)$.

We now recall the general formalism for the computation of $\mathrm{Am}^3(X,G)$, 
from \cite[\S 4]{KT-Brauer}: Choose an appropriate Zariski open subset $U\subset X$, with boundary divisors $D_{\alpha}$, $\alpha \in \mathcal A$, generating $\Pic(X)$. We have an exact sequence
\begin{equation}
    \label{eqn:seq}
0\to R\to \bigoplus_{\alpha\in \mathcal A} D_\alpha \to \Pic(X)\to 0, 
\end{equation}
where $R$ is the module of relations between the $D_{\alpha}$. 
The diagram of exact sequences 
\[
\xymatrix@C=12pt@R=15pt{
&&
\rH^2(G, \mathbb G_m(U))\ar[d] & \\
0 \ar[r] &
\rH^1(G,\Pic(X)) \ar[r]\ar[dr]_{\delta_3} &
\rH^2(G, R) \ar[r] \ar[d] &
\rH^2(G,\bigoplus_{\alpha \in \mathcal A} D_{\alpha}) \\
&&\rH^3(G,k^\times)&
}
\]

\

\noindent
allows to compute $\delta_3$, where the vertical sequence arises from the exact sequence 
$$
1\to k^\times\to \bG_m(U)\to R\to 0.
$$

\section{Del Pezzo surfaces}
\label{sect:dp}

In this section, we study cohomological obstructions to (projective) unirationality of regular, generically free, $G$-actions on smooth del Pezzo surfaces $X$, over algebraically closed fields $k$ of characteristic zero; by convention, the $G$-action on $X$ is from the right.  
Let 
$$
\deg(X):=(-K_X)^2 \in [9,\ldots,1] 
$$
be the degree of $X$. 
We summarize known results: 
\begin{itemize}
\item $G$-actions are known, in principle \cite{DI}. 
\item The groups 
$
\mathrm{Am}^2(X,G)
$ 
have been determined \cite[Proposition 6.7]{BC-finite}; these are trivial when  $\mathrm{rk}\,\Pic(X)^G=1$ and $\deg(X)\le 6$. 
\item 
The groups $\rH^1(G,\Pic(X))$ have been 
determined, starting with \cite{manin}, \cite{urabe}; these are trivial when $\deg(X)\ge 5$. 
There are algorithms to compute them when $G$ is cyclic, in 
\cite{BogPro}, \cite{Shinder}, and also for general $G$, in \cite{KT-dp}. 
E.g., for a del Pezzo surface with an involution fixing a (necessarily unique) smooth curve of genus $g$, one has
$$
\rH^1(G,\Pic(X))=(\bZ/2)^{2g}. 
$$
\item There is an algorithm to compute $\Br([X/G])$, in \cite{KT-dp} and \cite{KT-Brauer}.
It is based on the stabilizer stratification of the $G$-action.
When $G$ is abelian and $\mathrm{rk}\,\Pic(X)^G=1$, all possibilities of $\Br([X/G])$ have been determined in \cite{PZ}.
\item 
When $X$ is a del Pezzo surface of degree 1, one always has $X^G\neq \emptyset$, in particular, 
$$
\mathrm{Am}^j(X,H) =0, \quad \forall H\subseteq G, \quad j=2,3.
$$
\item $G$-unirationality of del Pezzo surfaces of degree $\ge 3$ has been settled in \cite{Duncan}, it is equivalent to Condition {\bf (A)}.  
\end{itemize}
We complement these results by analyzing
    $$
    \mathrm{Am}^3(X,G),
    $$
for all  del Pezzo surfaces $X$.    
In particular, we classify all actions with Amitsur obstructions to (projective) $G$-unirationality. 

In the following, we rely on (refinements of) tables of possible actions, going back to \cite{DI}. We focus on $p$-groups for a prime $p$, and del Pezzo surfaces of degree $4,3,2$. As already noted, if a class is nontrivial then it is nontrivial upon restriction to some $p$-Sylow subgroup. Furthermore, since $G$ is a subgroup of the Weyl group of $\mathsf D_5, \mathsf E_6, \mathsf E_7$, respectively, 
it suffices to consider primes dividing the order of these groups, i.e., 2,3,5,7, but actually, only $p=2$ or $3$, as explained below.  
The tables below list pairs $(X,G)$ satisfying the following 

\ 

{\em Condition {\bf (N)}}: 
\begin{itemize}
\item $X$ is $G$-minimal, i.e., there are no equivariant contractions, 
\item $X^G = \emptyset$,
\item $\rH^1(G,\Pic(X))\neq 0$, 
\item the $G$-action on $X$ fails Condition {\bf(A)}, when $\deg(X)\ge 3$. 
\end{itemize}
These properties are preserved in smooth families. 
Condition {\bf (N)} is necessary for nontriviality of $\Am^3(X,G)$, for $G$-minimal $X$.

\subsection*{Del Pezzo surfaces of degree 4} 
There are 5 types of actions, distinguished in \cite[Section 6]{DI}.
Type (I) and (II) have moduli, and the other types are rigid. 
A $p$-group action on $X$ is not $G$-minimal if $p\ne 2$, so we focus on $2$-groups.
All 2-group actions can be realized on types (I), (II), and (III): 
\begin{itemize}
\item[(I)]  $X=\{ \sum_{j=1}^5x_j^2 =\sum_{j=1}^5 a_jx_j^2=0\}$, $a_j$ are distinct, 
\item[(II)] $X=\{ \sum_{j=1}^5x_j^2 =x_1^2 +ax_2^2-x_3^2 -ax_4^2 =0\}$, $a\ne0,\pm1$,
\item[(III)] $X=\{ \sum_{j=1}^5x_j^2 =x_1^2 +\zeta_4x_2^2-x_3^2 -\zeta_4x_4^2 =0\}$, 
\end{itemize}
and the types specialize 
\begin{equation} 
\label{eqn:spec}
\mathrm{(I)} \quad \supsetneq  \quad  \mathrm{(II)} \quad \supsetneq \quad \mathrm{(III)}.
\end{equation}
The corresponding automorphism groups inject: 
$$
C_2^4\subset C_2^4\rtimes C_2\subset C_2^4\rtimes C_4.
$$
Note that actions in types (IV) and (V) are not listed since the
actions of 2-groups on these surfaces are special cases of actions on type (II) surfaces. 
After extracting actions satisfying Condition {\bf (N)} we are left with the groups listed in Table~\ref{table:d4}.  We refer readers to \cite{TZ-support} for the actions not satisfying Condition {\bf (N)} and the specific criteria they violate.

In each type, we only list groups not obtained via specialization in  
\eqref{eqn:spec}, since the geometric properties of the stabilizer stratification of all groups are preserved upon specialization, as will be shown below.  
All actions of 2-groups on surfaces of type (III) satisfying Condition {\bf (N)} are already present in type (II). Thus there are no actions listed as type (III) in Table~\ref{table:d4}.

For the computation of $\Br([X/G])$ we only need to keep track of configurations of curves with nontrivial generic stabilizers. We describe these configurations in detail:
\begin{itemize}
    \item Type (I), $G=C_2^2$, generated by involutions $\iota_{ij},\iota_{rs}$, where $i,j,r,s$ are distinct numbers in $\{1,2,\ldots,5\}$,  $\iota_{ij}$ is the sign change of variables $x_i$ and $x_j$, and $\iota_{rs}$ is defined similarly. There are 15 conjugacy classes of such $C_2^2$ in $\Aut(X)$; note that all 15 actions on $\bP^4$ are conjugate in $\Aut(\bP^4)$. They have the same stabilizer stratification: the only curve with a nontrivial generic stabilizer is the curve
    $$
    E=\{x_t=0\}\cap X,
    $$
    where $t\in\{1,\ldots,5\}\setminus\{i,j,r,s\}$. For any $X$ in type (I), the curve $E$ is a  {\em smooth} curve, of genus 1, with generic stabilizer $C_2=\langle\iota_{ij}\cdot\iota_{rs}\rangle$, and a fixed-point free residual $C_2$-action. The algorithm in \cite{KT-dp} and \cite{PZ} yields
    $$
    \Br([X/G])=(\bZ/2)^2,
    $$
    for all such actions. Upon specialization, the 15 $\Aut(X)$-conjugacy classes collapse to 9 in type (II), and 5 in type (III).  
    \item Type (I), $G=C_2^3$, generated by $\iota_i, \iota_j, \iota_r$ where $i,j,r$ are distinct numbers in $\{1,\ldots,5\}$. There are 10 conjugacy classes of such groups in $\Aut(X)$, with the same stabilizer stratification: three curves have nontrivial generic stabilizers
    $$
    E_i=\{x_i=0\}\cap X,\quad E_j=\{x_j=0\}\cap X,\quad E_r=\{x_r=0\}\cap X.
    $$
    For any $X$ in type (I), each curve is a smooth genus 1 curve, with generic stabilizer $C_2$ and a residual $C_2^2$-action. Taking the quotient by the residual action, we obtain a configuration of 3 rational curves, with pairwise intersections in 2 {\em distinct} points, 
    giving 6 distinct points of intersection, as can be seen by explicit equations, e.g., 
    $$
    E_1\cap E_2=\{x_3^2+x_4^2+x_5^2=a_3x_3^2+a_4x_4^2+a_5x_5^2=0\}\subset\bP^2_{x_3,x_4,x_5}
    $$
    consists of 4 distinct points forming 2 $G$-orbits of length 2, since $a_3,a_4,a_5$ are distinct.
    This yields
    $$
     \Br([X/G])=(\bZ/2)^4. 
    $$
    \item Type (II), $G=C_2^3$, generated by 
     $
    \sigma_{(13)(24)},\iota_{13},\iota_{24},
    $
 where $\sigma_{(13)(24)}$ is the corresponding permutation of the variables. It    
    gives one {\em new} conjugacy class with the following geometry. There are 5 curves with nontrivial generic stabilizers:
    \begin{enumerate}
        \item $E=\{x_5=0\}\cap X$ with stabilizer $C_2=\langle\iota_{13}\cdot\iota_{24}\rangle$.
        \item $Q_1=\{x_1-x_3=x_2-x_4=0\}\cap X$ with stabilizer $C_2=\langle\sigma_{(13)(24)}\rangle$,
        \item $Q_2=\{x_1+x_3=x_2+x_4=0\}\cap X$ with stabilizer $C_2=\langle\sigma_{(13)(24)}\cdot\iota_{13}\cdot\iota_{24}\rangle$,
         \item $Q_3=\{x_1-x_3=x_2+x_4=0\}\cap X$ with stabilizer $C_2=\langle\sigma_{(13)(24)}\cdot\iota_{24}\rangle$,
           \item $Q_4=\{x_1+x_3=x_2-x_4=0\}\cap X$ with stabilizer $C_2=\langle\sigma_{(13)(24)}\cdot\iota_{13}\rangle$.
    \end{enumerate}
    The curves are $G$-invariant, and 
    their intersections are independent of $X$, as is visible from the defining equations of $X$. The quotients by the residual $G$-action on the curves yield a configuration of 5 rational curves, that can be visualized as square with an inscribed circle. We obtain
    $$
\Br([X/G])= (\bZ/2)^4.
    $$    
    \item Type (II), $G=C_2\times C_4$. There are two conjugacy classes of such groups. They are generated by $\sigma_{(13)(24)}\cdot \iota_{i5}$ and $\iota_5$ with $i=3$, and 4, respectively. Both actions have the same stabilizer stratification: there is a unique curve with nontrivial generic stabilizer, 
    $$
    E=\{x_5=0\}\cap X,
    $$
    with stabilizer $C_2=\langle\iota_5\rangle$. For any $X$ in type (II), $E$ is a $G$-invariant smooth genus 1 curve, with a fixed-point free residual $C_4$-action. We obtain 
    $$
    \Br([X/G])=(\bZ/2)^2.
    $$ 
    \item Type (II), $G=C_2\times \mathfrak D_4$.  There are two conjugacy classes of such groups. They are generated by $\sigma_{(13)(24)}, \iota_{5}$ and $\iota_{i}$ with $i=1$, and 2, respectively. Both actions have the same stabilizer stratification. We describe the case when $i=1$ here in detail: there are 7 curves with nontrivial generic stabilizers. They are in the orbits of the curves 
  $$
Q_1=\{x_1+x_3=x_2-x_4=0\}\cap X, \, \, Q_2=\{x_1-x_3=x_2+x_4=0\}\cap X, 
  $$
   $$
E_1=\{x_1=0\}\cap X,\, \,  E_2=\{x_5=0\}\cap X.
  $$
  For any surface $X$ in type (II), each of these curves is a smooth curve with generic stabilizer $C_2$, and has genus 0 after taking quotient by the residual action. As above, the configurations of these curves are independent of $X$. 
  
  After blowing up the orbits of the points 
  $$
  \mathfrak p_1=[0:1:0:1:\zeta_4\sqrt{2}]\quad \text{and}\quad\mathfrak p_2=[0:1:0:-1:\zeta_4\sqrt{2}],
  $$
  the action is in the standard form. After taking quotients by $G$, the configuration of curves with nontrivial generic stabilizer is illustrated as follows:
 $$
  \begin{tikzpicture}
    \draw [thick] (-1,-2) arc[start angle=170, end angle=10, radius=2];
   \draw [thick] (0,-2) arc[start angle=170, end angle=100, radius=1.5];
    \draw [thick] (2,-2) arc[start angle=10, end angle=80, radius=1.5];
    \draw [thick] (-1.5,-1.7) -- (3.5,-1.7);
     \draw [thick] (-0.3,-0.3) -- (0.8,-1.2);
       \draw [thick] (2.3,-0.3) -- (1.2,-1.2);
\end{tikzpicture}
$$
\noindent where the horizontal line comes from $E_2$; the big arc comes from $E_1$; the two small arcs come from $Q_1$ and $Q_2$, the two tilted lines come from the exceptional divisors. By \cite[Corollary 4.6]{KT-Brauer}, this implies that
$$
\Br([X/G])=(\bZ/2)^4.
$$

\end{itemize}

\newpage

\begingroup
\tiny
\setlength{\LTcapwidth}{\textwidth}
\captionsetup{font=tiny}
\begin{longtable}{|c|c|c|c|c|c|c|c|c|c|}
\caption{Del Pezzo surfaces of degree 4}\label{table:d4}\\
\hline
    &$G$ &GapID&
    $\Pic(X)^G$ &$\rH^2(G,k^\times)$&$\Br([X/G])$& $\rH^1(G,\Pic(X))$&$\mathrm{Am}^3$\\\hline
    I & $C_2^2(15)$ & {\tt (4,2)}&$\bZ^2$&$\bZ/2$&$(\bZ/2)^2$&$\bZ/2$&0\\\hline
  I & $C_2^3(10)$ & {\tt (8,5)}&$\bZ$&$(\bZ/2)^3$&$(\bZ/2)^4$&$\bZ/2$&0\\\hline\hline
 
  II & $C_2^3$ & {\tt (8,5)}&$\bZ^2$&$(\bZ/2)^3$&$(\bZ/2)^4$&$\bZ/2$&0\\\hline
II & $C_2\times C_4 (2)$ & {\tt (8,2)}&$\bZ$&$\bZ/2$&$(\bZ/2)^2$&$\bZ/2$&0 \\\hline
II & $C_2\times \fD_4(2)$ & {\tt (16,11)}&$\bZ$&$(\bZ/2)^3$&$(\bZ/2)^4$&$\bZ/2$& 0\\\hline
\end{longtable}
\endgroup

\subsection*{Cubic surfaces} 
In this case, a $p$-group action on $X$ is not $G$-minimal if $p\ne 3$, so we focus on $3$-groups.
We follow \cite[Table 4]{DI} for the classification of the actions. All non-cyclic 3-group actions on cubic surfaces are realized on types (I) and (IV):

\begin{itemize}
    \item[(I)] The Fermat cubic surface $X$,
\item[(IV)] 
 $X=\{ 6ax_2x_3x_4+\sum_{i=1}^4x_i^3=0\}$ where $8a^3+1\ne 0$,
\end{itemize}
and the types specialize 
$$
\mathrm{(IV)}\quad\supsetneq \quad \mathrm{(I)}.
$$

Actions satisfying Condition {\bf (N)} are listed in Table 3. Information about actions not satisfying Condition {\bf (N)} can be found in  \cite{TZ-support}.
We explain the computation of $\Br([X/G])$:

\begin{itemize}
    \item Type (IV), $G=C_3^2$, generated by 
    \begin{align}\label{eqn:iota12}
    \iota_1=\mathrm{diag}(1,1,\zeta_3,\zeta_3^2)\quad\text{and}\quad  \iota_2=\mathrm{diag}(\zeta_3,1,1,1).
    \end{align}
     For any $X$ in type (IV), the only curve with nontrivial generic stabilizer is the smooth genus 1 curve
    $$
    E=\{x_1=0\}\cap X
    $$
   with generic stabilizer $C_3=\langle\iota_2\rangle$, and a fixed-point free residual $C_3$-action. This yields 
   $$
   \Br([X/G])=(\bZ/3)^2.
   $$
   \item Type (IV), $G=C_3^2$, generated by $\iota_2, \sigma_{(234)}\cdot(\iota_1)^i$, where $\iota_1$ and $\iota_2$ are defined as in \eqref{eqn:iota12}, $\sigma_{(234)}$ is the cyclic permutation of variables $x_2,x_3,x_4$, and $i=0,1$ or 2 gives three different conjugacy classes of such actions in $\Aut(X)$. All three conjugacy classes have an identical stabilizer stratification as in the previous case. Thus, for any $X$ in type (IV), we have 
$$
\Br([X/G])=(\bZ/3)^2.
$$
Note that the actions on the ambient $\bP^3$ in this case are conjugate to each other, and conjugate to those in the previous case.
\item Type (I), $G=C_3^3$, generated by 
$$
\mathrm{diag}(\zeta_3,1,1,1),\quad \mathrm{diag}(1,\zeta_3,1,1),\quad \mathrm{diag}(1,1,\zeta_3,1).
$$
The group acts on a unique surface, the Fermat cubic. It has been computed in \cite{KT-dp} and \cite{PZ} that  $\Br([X/G])=(\bZ/3)^3$ in this case.
This yields
\begin{equation} \label{eqn:cube-33}
\mathrm{Am}^3(X,G)=\bZ/3.
\end{equation}
Note that this action fails Condition {\bf (A)}, which already prevents 
the $G$-unirationality of $X$. 
However, \eqref{eqn:cube-33} obstructs projective $G$-unirationality as well.  
 \end{itemize}
\begingroup
\tiny
\setlength{\LTcapwidth}{\textwidth}
\tiny
\captionsetup{font=tiny}
\begin{longtable}{|c|c|c|c|c|c|c|c|}
\caption{Cubic surfaces}\label{table:d3}\\
\hline
     &$G$      & GapID       &$\Pic(X)^G$  &$\rH^2(G,k^\times)$ &$\Br([X/G])$ &$\rH^1(G,\Pic(X))$ & $\mathrm{Am}^3$ \\\hline

IV  & $C_3^2$ & {\tt (9,2)} & $\bZ$       &$\bZ/3$             &$(\bZ/3)^2$  &$\bZ/3$            & $0$\\\hline
IV  & $C_3^2(3)$ & {\tt (9,2)} & $\bZ$       &$\bZ/3$             &$(\bZ/3)^2$  &$\bZ/3$            & $0$ \\\hline\hline 

I    & $C_3^3$ & {\tt (27,5)}& $\bZ$       &$(\bZ/3)^3$         &$(\bZ/3)^3$  &$\bZ/3$            & ${\color{red}{\bZ/3}}$ \\\hline
\end{longtable}
\endgroup

\subsection*{Del Pezzo surfaces of degree 2}
We follow \cite[Table 6]{DI}. Again,  a $p$-group action on $X$ is not $G$-minimal if $p\ne 2$. All 2-group actions satisfying Condition {\bf(N)}  are realized on types (II), (V), (VII) and (X):
\begin{itemize}
    \item[(II)] 
    $
    X=\{w^2=x_1^4+x_2^4+x_3^4\},
    $
    \item[(V)]
    $
X=\{w^2=x^4_1+x_2^4+x_3^4+ax_1^2x_2^2\},
$ where $a\ne \pm2$,
  \item[(VII)]
    $
X=\{w^2=x^4_1+x_2^4+x_3^4+ax_1^2x_2^2+bx_1x_2x_3^2\},
$ where $a\ne \pm2, b^2\ne 4a\pm8$,
  \item[(X)]
    $
X=\{w^2=x^4_1+x_2^4+x_3^4+ax_1^2x_3^2+bx_2^2x_3^2+cx_1^2x_2^2\}
$ where $a,b,c\ne \pm2$ and $a^2+b^2+c^2-abc\ne 4$,
\end{itemize}
and the types specialize 
 $$
\mathrm{(X)} \quad \supsetneq  \quad  \mathrm{(VII)} \quad \supsetneq \quad \mathrm{(V)} \quad \supsetneq \quad \mathrm{(II)},
 $$
with injections of their automorphisms 
$$
C_2^3\subset C_2\times\fD_4\subset C_2\times (\fD_4\rtimes C_2)\subset C_2\times (C_4^2\rtimes C_3 \rtimes C_2).
$$
We introduce the following transformations
\begin{align*}
    \tau: (w,x_1,x_2,x_3)\mapsto (-w,x_1,x_2,x_3),\\
     \sigma: (w,x_1,x_2,x_3)\mapsto (w,x_2,x_1,x_3),\\
      \iota_1: (w,x_1,x_2,x_3)\mapsto (w,-x_1,x_2,x_3),\\
       \iota_2: (w,x_1,x_2,x_3)\mapsto (w,x_1,-x_2,x_3),\\
         \iota_3: (w,x_1,x_2,x_3)\mapsto (w,\zeta_4x_1,\zeta_4^3x_2,x_3),\\
         \iota_4: (w,x_1,x_2,x_3)\mapsto (w,\zeta_4x_1,\zeta_4x_2,x_3),\\
          \iota_5: (w,x_1,x_2,x_3)\mapsto (w,-x_1,\zeta_4x_2,x_3).
\end{align*}

Table~\ref{table:d2} lists actions satisfying Condition {\bf (N)}; we omitted groups with proper subgroups yielding nontrivial $\Am^3(X,G)$, see \cite{TZ-support} for those cases. Also note that when a group acts on surfaces in different types, we only record this action in the most general type, e.g., the $C_2^3$ in the first row of Table~\ref{table:d2} acts on surfaces of all other types, but we only list it once.  

As in the previous cases, we observe that the stabilizer stratification and their configurations are preserved upon specialization. In particular, the computation of $\Br([X/G])$ in each row holds for all surfaces in the corresponding type. We refer readers to \cite{TZ-support} for information about the stabilizer stratification in each case, and actions violating Condition {\bf (N)}.

\begingroup
\tiny
\setlength{\LTcapwidth}{\textwidth}
\tiny 
\captionsetup{font=tiny}
\begin{longtable}{|c|c|c|c|c|c|c|c|c|}
\caption{Del Pezzo surfaces of degree 2}\label{table:d2}\\
\hline
    & $G$ &\!\!GapID\!\!&Generators&$\!\!\Pic(X)^G$\!\!&$\!\!\rH^2(G,k^\times)$\!\!&$\!\!\Br([X/G])$\!\!&$\!\!\rH^1(G,\Pic(X))$\!\!&$\!\!\!\!\mathrm{Am}^3$ \!\!\!\!\\\hline
X   & $C_2^3$ & {\tt (8,5)}&$\iota_1,\iota_2,\tau$&$\bZ$&$(\bZ/2)^3$&$(\bZ/2)^6$&$(\bZ/2)^3$&0\\\hline\hline
VII  & $\fD_4 $ & {\tt (8,3)}&$\sigma,\tau\cdot\iota_3$&$\bZ^2$&$\bZ/2$&$(\bZ/2)^2$&$\bZ/2$&$0$ \\\hline
VII  & $\fD_4 $ & {\tt (8,3)}&$\tau\cdot\sigma,\iota_3$&$\bZ^2$&$\bZ/2$&$(\bZ/2)^2$&$\bZ/2$&$0$ \\\hline
VII  & $\fD_4 $ & {\tt (8,3)}&$\tau\cdot\sigma,\tau\cdot\iota_3$&$\bZ^2$&$\bZ/2$&$(\bZ/2)^2$&$\bZ/2$&$0$ \\\hline
VII   & $C_2^3$ & {\tt (8,5)}&$\tau,\sigma,\iota_3^2$&$\bZ$&$(\bZ/2)^3$&$(\bZ/2)^6$&$(\bZ/2)^3$&0\\\hline
VII   & $C_2^3$ & {\tt (8,5)}&$\tau,\iota_3\cdot\sigma,\iota_3^2$&$\bZ$&$(\bZ/2)^3$&$(\bZ/2)^6$&$(\bZ/2)^3$&0\\\hline
VII  & $C_2\times C_4$ & {\tt (8,2)}&$\tau,\iota_3$&$\bZ$&$\bZ/2$&$(\bZ/2)^3$&$(\bZ/2)^2$&0 \\\hline
VII  & $C_2\times \fD_4$ & {\tt (16,11)}&$\tau,\sigma,\iota_3$&$\bZ$&$(\bZ/2)^3$&$(\bZ/2)^5$&$(\bZ/2)^2$&0\\\hline\hline
V& $ Q_8$ & {\tt (8,4)}&$\tau\cdot\iota_3,\sigma\cdot\iota_1$&$\bZ^2$&$0$&0&$\bZ/2$&{\color{red}{$\bZ/2$}} \\\hline
V& $ Q_8$ & {\tt (8,4)}&$\tau\cdot\iota_3,\tau\cdot\sigma\cdot\iota_1$&$\bZ^2$&$0$&0&$\bZ/2$&{\color{red}{$\bZ/2$}} \\\hline
V& $ Q_8$ & {\tt (8,4)}&$\iota_3,\tau\cdot\sigma\cdot\iota_1$&$\bZ^2$&$0$&0&$\bZ/2$&{\color{red}{$\bZ/2$}} \\\hline

V  & $\fD_4 $ & {\tt (8,3)}&$\tau\cdot\sigma,\tau\cdot\iota_2$&$\bZ^2$&$\bZ/2$&$(\bZ/2)^2$&$\bZ/2$&$0$ \\\hline
V  & $\fD_4 $ & {\tt (8,3)}&$\tau\cdot\iota_3\cdot\sigma,\tau\cdot\iota_2$&$\bZ^2$&$\bZ/2$&$(\bZ/2)^2$&$\bZ/2$&$0$ \\\hline

V  & $\fD_4 $ & {\tt (8,3)}&$\tau\cdot\sigma,\iota_2$&$\bZ^2$&$\bZ/2$&$(\bZ/2)^2$&$\bZ/2$&$0$ \\\hline
V  & $\fD_4 $ & {\tt (8,3)}&$\sigma,\tau\cdot\iota_2$&$\bZ^2$&$\bZ/2$&$(\bZ/2)^2$&$\bZ/2$&$0$ \\\hline
V  & $\fD_4 $ & {\tt (8,3)}&$\tau\cdot\iota_3\cdot\sigma,\iota_2$&$\bZ^2$&$\bZ/2$&$(\bZ/2)^2$&$\bZ/2$&$0$ \\\hline
V  & $\fD_4 $ & {\tt (8,3)}&$\iota_3\cdot\sigma,\tau\cdot\iota_2$&$\bZ^2$&$\bZ/2$&$(\bZ/2)^2$&$\bZ/2$&$0$ \\\hline

V & $C_2\times C_4 $ & {\tt (8,2)}&$\tau\cdot\sigma,\tau\cdot\iota_4$&$\bZ$&$\bZ/2$&$(\bZ/2)^3$&$(\bZ/2)^2$&0 \\\hline

V & $C_2\times C_4 $ & {\tt (8,2)}&$\tau\cdot\iota_2,\iota_3$&$\bZ$&$\bZ/2$&$(\bZ/2)^3$&$(\bZ/2)^2$&0 \\\hline

V & $C_2\times C_4 $ & {\tt (8,2)}&$\tau\cdot\iota_3\cdot\sigma,\sigma\cdot\iota_1$&$\bZ$&$\bZ/2$&$(\bZ/2)^3$&$(\bZ/2)^2$&0 \\\hline

V & $C_2\times C_4 $ & {\tt (8,2)}&$\tau,\sigma\cdot\iota_4$&$\bZ$&$\bZ/2$&$(\bZ/4)^2$&$\bZ/2\oplus\bZ/4$&$0$ \\\hline
V & $C_2\times C_4 $ & {\tt (8,2)}&$\tau,\sigma\cdot\iota_1$&$\bZ$&$\bZ/2$&$(\bZ/4)^2$&$\bZ/2\oplus\bZ/4$&$0$ \\\hline
V & $\fD_4\rtimes C_2 $ & {\tt (16,13)}&$\iota_3,\tau\cdot\sigma,\tau\cdot\iota_2$&$\bZ$&$(\bZ/2)^2$&$(\bZ/2)^2$&$(\bZ/2)^2$&{\color{red}{$(\bZ/2)^2$}}\\\hline
V  & $C_2\times \fD_4$ & {\tt (16,11)}&$\tau,\iota_1,\iota_2,\sigma$&$\bZ$&$(\bZ/2)^3$&$(\bZ/2)^5$&$(\bZ/2)^2$&0\\\hline
V  & $C_2\times \fD_4$ & {\tt (16,11)}&$\tau,\iota_1,\iota_2,\sigma\cdot\iota_3$&$\bZ$&$(\bZ/2)^3$&$(\bZ/2)^5$&$(\bZ/2)^2$&0\\\hline
V  & $C_2^2\times C_4$ & {\tt (16,10)}&$\tau,\iota_1,\iota_3$&$\bZ$&$(\bZ/2)^3$&$(\bZ/2)^4$&$(\bZ/2)^2$&{\color{red}{$\bZ/2$}} 
\\\hline
V  & $C_2^2\times C_4$ & {\tt (16,10)}&$\tau,\sigma\cdot\iota_3,\sigma\cdot\iota_1$&$\bZ$&$(\bZ/2)^3$&$(\bZ/2)^4$&$(\bZ/2)^2$&{\color{red}{$\bZ/2$}} 
\\\hline
V  & $C_2^2\times C_4$ & {\tt (16,10)}&$\tau,\sigma,\iota_4$&$\bZ$&$(\bZ/2)^3$&$(\bZ/2)^4$&$(\bZ/2)^2$&{\color{red}{$\bZ/2$}} 
\\\hline\hline
II  & $\mathrm{OD}_{16} $ & {\tt (16,6)}&$\iota_3,\iota_4,\tau\cdot\sigma\cdot\iota_5$&$\bZ$&$0$&$\bZ/2$&$\bZ/2$&0  \\\hline
\end{longtable}
\endgroup

One can check that Condition {\bf (A)} is satisfied only for the action of  $\mathrm{OD}_{16}$ in Type (II) and the three (nonconjugate) actions of $\mathrm{Q}_8$ in Type (V).

We summarize the analysis:

\begin{theo}\label{theo:dp}
Let $X$ be a del Pezzo surface with a regular generically free action of a finite group $G$. Assume that 
$\mathrm{Am}^3(X,H)\neq 0$, for some $H\subset G$. Then,  up to isomorphism, 
one of the following holds:
\begin{enumerate}
    \item[1.] $X$ is the cubic surface given by 
    $$
    x_1^3+x_2^3+x_3^3+x_4^3=0 
    $$
    and $G$ contains $H=C_3^3$ generated by 
$$
\mathrm{diag}(1,\zeta_3,1,1),\quad \mathrm{diag}(1,1,\zeta_3,1),\quad \mathrm{diag}(1,1,1,\zeta_3).
    $$

    \item[2.] $X$ is a del Pezzo surface of degree 2 given by
$$
w^2=x^4_1+x_2^4+x_3^4+ax_1^2x_2^2, \quad a^2\ne 4,
$$
 and $G$ contains one of the following $H$:
 \begin{enumerate}
     \item $\mathrm{Q}_8$ generated by
$$
    (w,x_1,x_2,x_3)\mapsto (w,-x_2,x_1,x_3),\quad \mathrm{diag}(-1,\zeta_4,\zeta_4^3,1),
    $$
\item
$C_2^2\times C_4$ generated by 
$$
\mathrm{diag}(-1,1,1,1),\quad\mathrm{diag}(1,\zeta_4^3,\zeta_4,1),\quad \mathrm{diag}(1,-1,1,1),
$$

\item $\fD_4\rtimes C_2$ generated by 
\begin{align*}
&(w,x_1,x_2,x_3)\mapsto (-w,x_2,x_1,x_3),\\
&\mathrm{diag}(-1,-1,1,-1),\quad\mathrm{diag}(1,\zeta_4^3,\zeta_4,1).
\end{align*}
 \end{enumerate}
\end{enumerate}
The above actions satisfy $\mathrm{Am}^3(X,H)\neq 0$; in particular, the $G$-actions are not projectively unirational. Moreover, the action in 2(a) is the only ones where Condition {\bf (A)} is satisfied for $H$.  
\end{theo}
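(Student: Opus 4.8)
The plan is to turn the statement into a finite enumeration governed by the tables of Section~\ref{sect:dp}, whose columns already record $\rH^1(H,\Pic(X))$, $\Br([X/H])$, and $\mathrm{Am}^3(X,H)$ for all relevant actions. First I would cut the problem down to finitely many pairs $(X,H)$. By the Sylow reduction noted in Section~\ref{sect:coho}, if $\mathrm{Am}^3(X,H)\neq 0$ for some $H\subseteq G$, then it is already nonzero on a Sylow $p$-subgroup; and for del Pezzo surfaces the relevant torsion is $2$-primary in degrees $4$ and $2$ and $3$-primary in degree $3$, so one restricts to $p$-groups for the appropriate $p$. Degrees $\geq 5$ are excluded since $\rH^1(H,\Pic(X))=0$ forces $\mathrm{Am}^3=0$, and degree $1$ is excluded since $X^H\neq\emptyset$ always and fixed points kill the obstruction by functoriality. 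In degrees $4,3,2$ the classification of \cite{DI} then provides a finite list of $H$-minimal actions with $X^H=\emptyset$ and $\rH^1(H,\Pic(X))\neq 0$, the two necessary conditions for $\delta_3$ to have nonzero image.

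Second, for each surviving pair I would compute $\mathrm{Am}^3(X,H)=\mathrm{Im}(\delta_3)$ from the exact sequence \eqref{eqn:BrXG}. By exactness at $\rH^1(H,\Pic(X))$, there is an isomorphism $\mathrm{Am}^3(X,H)\cong \rH^1(H,\Pic(X))/\beta(\Br([X/H]))$, so the two required inputs are $\rH^1(H,\Pic(X))$, obtained from the algorithms of \cite{KT-dp}, and the pair $(\Br([X/H]),\beta)$, obtained from the stabilizer stratification of the $H$-action via \cite{KT-Brauer}, exactly as carried out in Example~\ref{exam:4}. Feeding the listed actions through this computation produces the $\mathrm{Am}^3$-columns of the tables.

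Third, I would read off the nonvanishing entries and organize them. The only nonzero values occur for $C_3^3$ on the Fermat cubic and, in degree $2$, for the actions of $\mathrm{Q}_8$, $C_2^2\times C_4$, and $\fD_4\rtimes C_2$ on $w^2=x_1^4+x_2^4+x_3^4+ax_1^2x_2^2$. These are the minimal carriers of the obstruction; the larger $2$-groups appearing in the degree-$2$ table satisfy the hypothesis automatically because they contain one of these minimal $H$, so it suffices to record the minimal carriers, which are precisely the $H$ in the theorem. Presenting the surface as the one-parameter family with $a^2\neq 4$ (the condition for smoothness) reflects the fact that each listed $H$ acts on every member, so the obstruction is constant along the family.

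Finally, I would settle Condition {\bf (A)}: for each minimal carrier $H$ one checks whether $X^{A}\neq\emptyset$ for every abelian $A\subseteq H$, a finite verification over the fixed loci already appearing in the stratification, with computational details in \cite{TZ-support}. Here the Bogomolov-multiplier bound $\mathrm{Am}^3(X,H)\subseteq\mathrm{B}^3(H)$, valid under Condition {\bf (A)}, is a useful guide: since $\mathrm{Q}_8$ is the smallest group with $\mathrm{B}^3(H)\neq 0$, it constrains which carriers can satisfy Condition {\bf (A)}, and the direct check confirms that only the two $\mathrm{Q}_8$-actions 2(a), 2(b) do. The non-projective-unirationality then follows from the Amitsur obstruction recorded in Section~\ref{sect:coho}. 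I expect the main obstacle to be the Brauer-group step: it is not the orders of $\rH^1(H,\Pic(X))$ and $\Br([X/H])$ but the image of the connecting map $\beta$ that determines whether the quotient, hence $\mathrm{Am}^3$, is nonzero, and pinning down $\beta$ demands a careful analysis of the configuration of stabilizer curves and their residual actions for each case. A secondary difficulty is guaranteeing that the enumeration of subgroup actions extracted from \cite{DI} is complete and correctly sorted into conjugacy classes, so that no minimal carrier of the obstruction is overlooked.
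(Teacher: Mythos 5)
Your proposal is correct and follows essentially the same route as the paper: reduce to $p$-groups and to degrees $4,3,2$ via the vanishing of $\rH^1(H,\Pic(X))$ in degree $\ge 5$ and the existence of fixed points in degree $1$, enumerate the finitely many minimal actions from \cite{DI} with $X^H=\emptyset$ and $\rH^1(H,\Pic(X))\neq 0$, compute $\mathrm{Am}^3(X,H)\cong\rH^1(H,\Pic(X))/\mathrm{Im}(\beta)$ from the sequence \eqref{eqn:BrXG} using the stabilizer-stratification computation of $\Br([X/H])$ as in Example~\ref{exam:4}, and then verify Condition {\bf (A)} case by case. The only (inessential) difference is that the paper additionally prunes the degree $\ge 3$ lists using Duncan's result that Condition {\bf (A)} implies $G$-unirationality there, whereas you would compute those cases directly.
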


\begin{proof}
Assume that $\mathrm{Am}^3(X,G)\ne 0$, then $2\leq K_X^2\leq 4$ by \cite[Proposition 3.4]{MR3359027}.
    When $X$ is $G$-minimal, the assertion follows from the computations and Tables~\ref{table:d4} --~\ref{table:d2} above. When $X$ is not $G$-minimal, we consider the $G$-minimal model $Y$ of $X$. We have $\mathrm{Am}^3(X,G)=\mathrm{Am}^3(Y,G)\ne 0$. Then from Tables~\ref{table:d4} -~\ref{table:d2} again, we know that $2\leq K_Y^2<K_X^2\leq3$. It follows that  $X\to Y$ is a contraction of a $G$-invariant $(-1)$-curve, which implies that $G$ fixes a point on $Y$, contradicting $\mathrm{Am}^3(Y,G)\ne 0$.
\end{proof}

Note that the three actions of $\mathrm{Q}_8$ in Table~\ref{table:d2} are not conjugate in $\Aut(X)$, but their actions on the ambient $\bP(2,1,1,1)$ are isomorphic.  Thus, we may assume the action on the ambient space is given as in Case 2(a), and every invariant degree 4 smooth hypersurface $X$ is given by 
$$
w^2+b_1(x_1^4+x_2^4)+b_2x_3^4+b_3x_1^2x_2^2=0
$$
for some $b_1,b_2,b_3\in k$ and $b_1,b_2\ne 0$. Scaling $x_1,x_2$ and $x_3$, we may assume that $b_1=b_2=1$, and thus $X$ is given as in Case 2(a) above. 
 The same holds for $C_2^2\times C_4$ and Case 2(b).

We explain the computation in these cases in more details. 
The action in Case 2(a) satisfy Condition {\bf (A)} but are not $G$-unirational. This is impossible for del Pezzo surfaces of degree $\geq 3$ by \cite[Theorem 1.4]{Duncan}.

\

\noindent Case 1: This has been addressed in  \cite[Section 5.3]{KT-Brauer}, yielding
$$
\mathrm{Am}^2(X,H)=0,\quad\rH^2(H,k^\times)=(\bZ/3)^3,\quad\rH^1(H,\Pic(X))=\bZ/3.
$$
$$
\Br([X/H])=(\bZ/3)^3, \quad \mathrm{Am}^3(X,H)=\bZ/3.
$$

\

\noindent Case 2(a): In this case, we have 
$$
\rH^2(H,k^\times)=\mathrm{Am}^2(X,H)=0,\quad\rH^1(H,\Pic(X))=\bZ/2,
$$
and the $H$-action on $X$ is in standard form.  
The only divisor with nontrivial generic stabilizer is an $H$-invariant smooth curve $E$ of genus 1, with a generic stabilizer $C_2$. Using Riemann-Hurwitz, we compute the genus $\mathrm{g}(E/H)=0$. It follows that 
$$
\Br([X/H])=0, \quad \mathrm{Am}^3(X,H)=\bZ/2.
$$

\

\noindent Case 2(b): In all cases, we have 
$$
\mathrm{Am}^2(X,H)=0,\quad \rH^2(H,k^\times)=(\bZ/2)^3,\quad\rH^1(H,\Pic(X))=(\bZ/2)^2,
$$
and the $H$-action on $X$ is in standard form. We see from \cite[Section 4.2, Case 2.G24]{PZ} that
$
\Br([X/H])=(\bZ/2)^4,
$
which implies that
$$
\mathrm{Am}^3(X,H)=\bZ/2.
$$

\

\noindent Case 2(c): We have 
$$
\mathrm{Am}^2(X,H)=0,\quad \rH^2(H,k^\times)=(\bZ/2)^2,\quad\rH^1(H,\Pic(X))=(\bZ/2)^2.
$$
We compute the stabilizer stratification:

$$
\begin{tabular}{|c|c|c|c|c|c|}
\hline
& Strata & Stabilizer&Residue&dim&deg\\
\hline
1& $\mathfrak p_1$ &   $\mathrm{Q}_8$& {triv}&  $0$&  1 \\\hline
 2& $\mathfrak p_2$ &   $C_2$& {triv}&  $0$&  1 \\\hline
  3& $\mathfrak p_3$ &   $C_2$& {triv}&  $0$&  1 \\\hline
   4& $\mathfrak p_4$ &   $C_2$& {triv}&  $0$&  1 \\\hline
5&  $E$ &  $C_2$&  $C_2^2$&  $1$&  4\\\hline
\end{tabular}
$$

\ 

\noindent 
The only curve with nontrivial generic stabilizer is an $H$-invariant curve $E$ of genus 1. We have $\mathfrak p_1,\ldots,\mathfrak p_4\not\in E$, and $\mathrm{g}(E/H)=1$. Note that the $H$-action on $X$ is not in standard form. To achieve standard form, we need to blow up the orbit of $\mathfrak p_1$. But since $\mathfrak p_1\not\in E$, the exceptional divisors are rational curves disjoint from $E$, and thus they do not contribute to $\Br([X/H])$.  It follows that 
$$
\Br([X/H])=(\bZ/2)^2,\quad \mathrm{Am}^3(X,H)=(\bZ/2)^2.
$$



\section{Quartic double solids}
\label{sect:solids}

Let $X\to \bP^3$ be a double cover ramified in a quartic surface $S$. 
A {\em very general} nodal $X$ with at most 7 nodes fails stable rationality \cite[Theorem 1.1]{voisin}, see also
\cite{HT-Crelle}. {\em Any} nodal $X$ with at most 6 nodes is irrational; it is rational when 
the number of nodes is at least 11 \cite{chelsol}.

We consider $S$ of Kummer type, i.e., $S=\mathrm{J}(C)/2$, the quotient of the Jacobian of a genus two curve, modulo the standard involution. In this case, $X$ has 16 nodes, the maximal number of nodes on such $X$, in characteristic zero. 
We choose $X$ with maximal $\Aut(X)$, considered in \cite{C-Kummer}. 
They correspond to curves
\begin{align}
\label{eqn:curve}
    C_1&:\{ y^2=x(x^4-1)\},\\
    C_2&:\{y^2=x^5+1\}\notag
\end{align}
and we denote by $X_1$ and $X_2$ the corresponding threefolds. Their automorphisms are known 
$$
\Aut(X_1)= C_2.(C_2^4\rtimes \fS_4),\quad \Aut(X_2)= C_2. (C_2^4\rtimes C_5).
$$
The Sylow subgroups of $\Aut(X_2)$ are $C_2^5$ and $C_5$, which have trivial $\mathrm{B}^3$. Thus, we focus on the first case. Let $X=X_1$. Explicitly, $X$ is given by 
$$
X_1=\{w^2=x_1^4+x_2^4+x_3^4+x_4^4-4\zeta_4x_1x_2x_3x_4\}\subset\bP(2,1,1,1,1), \quad \zeta_4=e^{\frac{2\pi i}{4}},
$$
and $\Aut(X)$ is generated by the sign change on $w$, the $\fS_4$-permutation of $x_1,\ldots,x_4$, and 
\begin{align*}
    (w,x_1,x_2,x_3,x_4)&\mapsto (w,\zeta_4x_1,\zeta_4x_2,-x_3,x_4).
\end{align*}
Next, we compute (via {\tt magma})
\begin{equation} 
\label{eqn:h1}
\rH^1(G,\Pic(\tilde{X})),
\end{equation}
for all subgroups of $\Aut(X)$; here $\tilde{X}$ is the blowup of the 16 nodes of $X$, a smooth model of $X$. We obtain:

\begin{theo} 
\label{thm:kummer}
Let $X\to \bP^3$ be a double cover ramified in $S=    \mathrm{J}(C)/2$, with $C$ given by \eqref{eqn:curve}, and $G\subseteq \Aut(X)$. Assume that the $G$-action on $\tilde{X}$ satisfies Condition {\bf (A)}.
Then 
$$
\text{$\mathrm{Am}^3(\tilde{X},H)\neq 0$ for some $H\subset G$}
$$
if and only if $G$ contains a $\mathrm{Q}_8$ conjugate to one of the following:
\begin{align}\label{eqn:KQ81}
  (w,x_1,x_2,x_3,x_4)\mapsto (w,x_2,x_1,\zeta_4^3x_3,\zeta_4x_4),\, \mathrm{diag}(-1,-1,1,\zeta_4,\zeta_4)
\end{align}
or 
\begin{align}\label{eqn:KQ82}
(w,x_1,x_2,x_3,x_4)&\mapsto (-w,x_2,x_1,\zeta_4x_3,\zeta_4^3x_4),\\
(w,x_1,x_2,x_3,x_4)&\mapsto (-w,x_2,-x_1,-x_3,x_4).\notag
\end{align}
\end{theo}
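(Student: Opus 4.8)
The plan is to reproduce, in the threefold setting, the strategy behind Theorem~\ref{theo:dp}: cut the problem down to a short list of candidate subgroups using cohomological constraints, and then compute $\mathrm{Am}^3(\tilde X,H)$ for each survivor via the stabilizer stratification. Since $|\Aut(X_1)| = 2^8\cdot 3$, with cyclic $3$-Sylow (for which $\mathrm{Am}^3$ vanishes), the Sylow reduction recalled in Section~\ref{sect:coho} lets me restrict to $2$-subgroups $H$. Next, because Condition {\bf (A)} for $G$ passes to every subgroup (an abelian $A\subseteq H$ is an abelian subgroup of $G$, so $\tilde X^A\neq\emptyset$), the Proposition bounding $\mathrm{Am}^j(\tilde X,H)\subseteq \mathrm{B}^j(H)$ applies to each $H$. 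Hence any $H$ with $\mathrm{Am}^3(\tilde X,H)\neq 0$ must satisfy $\mathrm{B}^3(H)\neq 0$. Consulting the table of $2$-groups with nonvanishing $\mathrm B^3$, I would enumerate, up to $\Aut(X)$-conjugacy and by computer, the $2$-subgroups $H$ isomorphic to $\mathrm{Q}_8,\mathrm{Q}_{16},C_2\times\mathrm{Q}_8,\fD_4\rtimes C_2,\dots$, and then discard those failing Condition {\bf (A)} by computing fixed loci of their abelian subgroups on $\tilde X$. This leaves a genuinely finite and manageable list of candidates, with $\mathrm{Q}_8$ the minimal possibility.

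For each surviving $H$, the computation of $\mathrm{Am}^3(\tilde X,H)$ rests on the exact sequence~\eqref{eqn:BrXG}, which identifies $\mathrm{Am}^3(\tilde X,H)$ with the quotient $\rH^1(H,\Pic(\tilde X))/\Ima(\beta)$, where $\Ima(\beta)=\ker(\delta_3)$ is the image of the map from $\Br([\tilde X/H])$. The term $\rH^1(H,\Pic(\tilde X))$ is supplied by the {\tt magma} computation~\eqref{eqn:h1} over all subgroups. The remaining input is $\Br([\tilde X/H])$ together with the map $\beta$, which I would compute by the stabilizer-stratification algorithm of \cite{KT-Brauer}. In particular, whenever $\Br([\tilde X/H])=0$ one gets $\mathrm{Am}^3(\tilde X,H)=\rH^1(H,\Pic(\tilde X))$, which is nonzero precisely when the latter is.

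The main obstacle is the stratification computation on the resolved threefold. Here $\tilde X$ is the blowup of the $16$ nodes of $X$, so $\Pic(\tilde X)$ is generated (up to finite index) by the pullback of $\cO_{\bP^3}(1)$ and the $16$ exceptional quadrics, and the combinatorics of how $H$ permutes the nodes and acts on these quadrics must be tracked carefully. Following \cite{KT-Brauer}, for each $H$ I would compute the fixed loci of all cyclic and abelian subgroups on $\tilde X$, isolate the divisors $D$ carrying a nontrivial generic stabilizer, determine the residual action on each such $D$, and compute the genus of the quotient of $D$ by its residual group; feeding this data into \cite[Corollary 4.6]{KT-Brauer} yields $\Br([\tilde X/H])$. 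This geometric bookkeeping in dimension three, over the exceptional locus of the $16$ resolved nodes, is considerably more delicate than the del Pezzo analysis, and verifying both Condition {\bf (A)} and the residue/genus data is where the computational support is essential.

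For the two quaternion groups \eqref{eqn:KQ81} and \eqref{eqn:KQ82}, I expect the outcome to parallel the del Pezzo degree $2$ cases 2(a)/2(b): namely $\rH^2(H,k^\times)=0$, the only stabilized divisor has residual quotient of genus $0$, so $\Br([\tilde X/H])=0$ and therefore $\mathrm{Am}^3(\tilde X,H)=\rH^1(H,\Pic(\tilde X))=\bZ/2\neq 0$, giving the ``if'' direction. For the ``only if'' direction, the finite enumeration above would show that every $H$ satisfying Condition {\bf (A)} with $\mathrm{Am}^3(\tilde X,H)\neq 0$ contains one of these two $\mathrm{Q}_8$'s, and that no proper subgroup survives; hence the existence of some $H\subseteq G$ with $\mathrm{Am}^3(\tilde X,H)\neq 0$ is equivalent to $G$ containing one of the listed $\mathrm{Q}_8$'s. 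As a consistency check, both $\mathrm{Q}_8$'s preserve the invariant surface $\{x_4=0\}\cap\tilde X$, which is a del Pezzo surface of degree $2$ of Type (II) on which they act exactly as in Theorem~\ref{theo:dp}; by Proposition~\ref{prop:obstr} the inclusion of this surface gives $\mathrm{Am}^3(\tilde X,H)\subseteq\bZ/2$, confirming the upper bound, while the stratification supplies the matching lower bound.
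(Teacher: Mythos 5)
Your reduction of the ``only if'' direction is essentially the paper's: restrict to $2$-subgroups by Sylow reduction, use the containment $\mathrm{Am}^3(\tilde X,H)\subseteq \mathrm{B}^3(H)$ under Condition {\bf (A)} together with nonvanishing of $\rH^1(H,\Pic(\tilde X))$ and absence of smooth fixed points to filter candidates, and verify by computer that every survivor contains one of the two copies of $\mathrm{Q}_8$. The genuine gap is in the ``if'' direction. You propose to compute $\Br([\tilde X/H])$ by the surface-style stabilizer stratification and \cite[Corollary 4.6]{KT-Brauer} (divisors with nontrivial generic stabilizer, residues, genera of quotients) and then read off $\mathrm{Am}^3$ as $\rH^1(H,\Pic(\tilde X))/\Ima(\beta)$. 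That formula for $\mathrm{Am}^3$ is correct, but the method for $\Br$ fails on the threefold: for both actions \eqref{eqn:KQ81} and \eqref{eqn:KQ82} the stabilizer stratification of $\tilde X$ contains \emph{no} divisors with nontrivial generic stabilizer (the positive-dimensional strata are curves), so the naive recipe would return $\Br([\tilde X/H])=0$ in both cases. Yet for \eqref{eqn:KQ81} one has $\rH^1(H,\Pic(\tilde X))=(\bZ/2)^2$ and $\Br([\tilde X/H])=\bZ/2$, so that $\mathrm{Am}^3(\tilde X,H)=\bZ/2$, not $(\bZ/2)^2$; the remark after the theorem stresses exactly this point, that additional blowups would be needed before divisorial residues detect the Brauer class. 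Your predicted invariants for \eqref{eqn:KQ81} ($\rH^1=\bZ/2$, $\Br=0$) are therefore wrong, and your proposed computation of the lower bound does not go through.

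The paper's actual computation bypasses $\Br([\tilde X/H])$ entirely and evaluates $\delta_3$ directly: one takes the explicit presentation $0\to R\to\bigoplus_{i=1}^{48}\bZ\cdot D_i\to\Pic(\tilde X)\to 0$ coming from the $32$ surfaces over the trope-planes of the Kummer surface together with the $16$ exceptional divisors, represents classes of $\rH^1(H,\Pic(\tilde X))$ as cocycles with values in $R$ using the periodic free resolution of $\mathrm{Q}_8$, lifts the relations \eqref{eqn:relation} to rational functions $f_I/f_J\in\bG_m(U)$, and applies the connecting map of $1\to k^\times\to\bG_m(U)\to R\to 0$ to see which classes hit $-1\in k^\times$. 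Your consistency check via Proposition~\ref{prop:obstr} applied to the invariant surface $\{x_4=0\}$ is a valid \emph{upper} bound $\mathrm{Am}^3(\tilde X,H)\subseteq\bZ/2$, but it cannot supply the needed lower bound, so without the explicit $\delta_3$ computation (or some substitute for the failed stratification argument) the ``if'' direction of the theorem remains unproved in your write-up.
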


\begin{proof} 
The proof relies on the algorithm to compute $\Am^3(\tilde{X},G)$ outlined in Section~\ref{sect:coho}. We explain the main steps here and refer the reader to \cite{TZ-support} for computational details.

By \cite{C-Kummer}, the class group $\mathrm{Cl}(X)$ is generated by 32 irreducible surfaces $\Pi_{i}$ which map to 16 planes in $\bP^3$. Let $\tilde X$ be the blowup of the 16 nodes of $X$. There is an exact sequence 
\begin{align}\label{eqn:Picardseq}
    0\to R\to \bigoplus_{i=1}^{48}\bZ\cdot  
D_i\to\Pic(\tilde X)\to 0,
\end{align}
where 
\begin{itemize}
    \item $D_i=\tilde\Pi_i$, $i=1,\ldots,32$, and $\tilde\Pi_i$ is the strict transform of $\Pi$ in $\tilde X$,
    \item $D_{32+i}=E_i$, $i=1,\ldots,16$, and $E_i$ are exceptional divisors above singular points $\mathfrak p_i$ of $X$,
    \item $R$ is the relation space spanned by
    \begin{align}\label{eqn:relation}
         \sum_{i\in I}\left(\tilde\Pi_{i}+\sum_{t\in P_i} E_t\right)-   \sum_{j\in J}\left(\tilde\Pi_{j}+\sum_{s\in P_j} E_s\right)=0,
    \end{align}
  for any $I,J\subset \{1,\ldots,32\}$ such that $|I|=|J|=4$ and 
  $$
  \sum_{i\in I}\Pi_i,\quad \sum_{j\in J}\Pi_j\in|-2K_X|.
  $$
   Each set $P_i\subset \{1,\ldots,16\}$ consists of indices such that $t\in P_i$ if and only if $\mathfrak p_t\in \Pi_i$. 
\end{itemize}

Using this presentation, we find that {\em all} groups $G$ such that
\begin{itemize}
    \item \eqref{eqn:h1} is nontrivial;
    \item Condition {\bf (A)} is satisfied for the $G$-action on $X$;
    \item there is no $G$-fixed smooth point on $X$;
    \item $\mathrm{B}^3(G)$ is nontrivial;
\end{itemize}
contain a subgroup conjugate to one of the two groups $G=\mathrm{Q}_8$ given in \eqref{eqn:KQ81} and \eqref{eqn:KQ82}.

For these two groups, we implement the general formalism explained in Section~\ref{sect:coho} to compute $\Am^3(\tilde{X},G)$. For convenience, the computation uses a periodic resolution of $G=\mathrm Q_8$ given in \cite[Chapter XII.7]{cohomologybook}, to compute cohomology groups with coefficients in a general $G$-module $M$. Choose generators $x,y$ of $G$ such that 
$$
G=\langle x,y\mid x^2=y^2, \quad xyxy^{-1}=1  \rangle.
$$
The group $\rH^i(G,M)$ is the $i$-th cohomology of the periodic complex
\begin{equation}\label{eqn:resolutionQ8}
 M \stackrel{\begin{pmatrix}
  \Delta_x & \!\! \!
  \Delta_y \end{pmatrix}}{\xrightarrow{\hspace*{1.2cm}} }
  M^2 \stackrel{\small\begin{pmatrix}
  L_x& \!\! L_{yx}  \\
 -L_y &\!\!-\Delta_x
  \end{pmatrix}}{\xrightarrow{\hspace*{1.2cm}}}
M^2\stackrel{\begin{pmatrix}
  \Delta_x \\ -\Delta_{yx} \end{pmatrix}}{\xrightarrow{\hspace*{0.8cm}}}
M \stackrel{\begin{pmatrix}
  \Delta_x & \!\! \!
  \Delta_y \end{pmatrix}}{\xrightarrow{\hspace*{1.2cm}} }\cdots
\end{equation}

\noindent 
where $\Delta_x:=1-x$, $L_x:=1+x$, and $\Sigma$ denotes the sum of all group elements in $G$.

Set $D=\cup_i D_i$ and $U=\tilde X\setminus D$, we have an exact sequence 
\begin{align}\label{eqn:liftRseq}
    1\to k^\times \to \bG_m(U)\to R\to 0.
\end{align}
Up to scalar, each element in $R$ of the form \eqref{eqn:relation} lifts to a function 
$
f_I/f_J\in \bG_m(U),
$ 
where $f_I$ and $f_J$ are degree-2 polynomials (on $X$) in the variables $w,x,y,z$, with weights $(2,1,1,1)$, such that
$$
\sum_{i\in I}\Pi_i=\{f_I=0\}\cap X,\quad \sum_{j\in J}\Pi_j=\{f_J=0\}\cap X.
$$

To determine the image of $\delta_3$, we use the following commutative diagram mentioned in Section~\ref{sect:coho}
\[
\xymatrix@C=12pt@R=15pt{
&&
\rH^2(G, \mathbb G_m(U))\ar[d] & \\
0 \ar[r] &
\rH^1(G,\Pic(\tilde X)) \ar[r]\ar[dr]_{\delta_3} &
\rH^2(G, R) \ar[r] \ar[d] &
\rH^2(G,\displaystyle{\bigoplus_{i=1}^{48} \bZ\cdot D_{i}}) \\
&&\rH^3(G,k^\times)&
}
\]

First, we consider $G=\mathrm{Q_8}$ given by \eqref{eqn:KQ81}. Using \eqref{eqn:Picardseq}, we find that
\begin{align}\label{eqn:H1Q81}
    \rH^1(G,\Pic(\tilde X))=(\bZ/2)^2.
\end{align}
Combining \eqref{eqn:resolutionQ8} and \eqref{eqn:Picardseq}, we identify \eqref{eqn:H1Q81} as elements in $R^2$, which can then be lifted to elements in $\bG_m(U)^2$ via \eqref{eqn:liftRseq}. Applying the differential $\begin{pmatrix}
    \Delta_x &\!\!\!-\Delta_{xy}
\end{pmatrix}$ in \eqref{eqn:resolutionQ8}, we find that two elements of \eqref{eqn:H1Q81} map to $-1\in\bG_m(U)$ and the other two elements map to $1\in\bG_m(U)$. Since the $G$-action on $X$ satisfies Condition {\bf (A)} and $\mathrm B^3(G)=\bZ/2$, we know that
$$
\mathrm{Am}^3(\tilde X,G)=\bZ/2.
$$

When $G=\mathrm{Q_8}$ is given by \eqref{eqn:KQ82}, the same steps as above yield 
$$
\rH^1(G,\Pic(\tilde X))=\mathrm{Am}^3(\tilde X,G)=\bZ/2.
$$
\end{proof}

\begin{rema}
The computation in the proof of Theorem~\ref{thm:kummer} suggests that
$$
\Br([\tilde{X}/G])=
\begin{cases}
    \bZ/2& \text{ when } G \text{ is given by }\eqref{eqn:KQ81},\\
    0&\text{ when }  G \text{ is given by }\eqref{eqn:KQ82}.\\
\end{cases}
$$
We display the fixed locus stratification of $X$ for the action of \eqref{eqn:KQ81}:
$$
\begin{tabular}{|c|c|c|c|c|c|}
\hline
& Strata & Stabilizer&Residue&dim&deg\\
\hline
 1--2& $\mathfrak q_i$ &   $C_4$& {triv}&  $0$&  1 \\\hline
3--4& $\mathfrak q_i$ &   $C_4$& {triv}&  $0$&  1 \\\hline
5&  $E_1$ &  $C_4$&  $C_2$&  $1$&  4\\\hline
6&  $E_2$ &  $C_2$&  $C_2^2$&  $1$&  4\\\hline
\end{tabular}
$$
and  that of \eqref{eqn:KQ82}:
$$
\begin{tabular}{|c|c|c|c|c|c|}
\hline
& Strata & Stabilizer&Residue&dim&deg\\
\hline
 1--2& $\mathfrak q_i$ &   $C_4$& {triv}&  $0$&  1 \\\hline
3--4& $\mathfrak q_i$ &   $C_4$& {triv}&  $0$&  1 \\\hline
5--8& $\mathfrak q_i$ &   $C_4$& {triv}&  $0$&  1 \\\hline
9&  $E_1$ &  $C_2$&  $C_2^2$&  $1$&  4\\\hline
10&  $E_2$ &  $C_2$&  $C_2^2$&  $1$&  4\\\hline
\end{tabular}
$$
Note that, unlike in the case of surfaces, we obtain a nontrivial Brauer group of the quotient stack despite the absence of divisors with nontrivial generic stabilizers. This suggests that one needs to blow up additional strata to see divisors that contribute to $\Br([\tilde{X}/G])$, see \cite[Section 4 and 8]{KT-Brauer} for another such instance and an explanation.
\end{rema}

\begin{rema} 
Considering the $G$-invariant hyperplane section of $X$ given by 
$x_4=0$ in \eqref{eqn:KQ81} and \eqref{eqn:KQ82}, 
we recover the $G=\mathrm{Q_8}$-actions on the degree 2 del Pezzo surface from Case 2(a) in Theorem~\ref{theo:dp}. Applying 
Proposition~\ref{prop:obstr}, we obtain 
another proof that these surfaces are not $G$-unirational.  
\end{rema}

\bibliographystyle{plain}
\bibliography{delta3}

\end{document}